\title{The random graph intuition for the tournament game}
\author{
\quad{Dennis Clemens
\thanks{Department of Mathematics and Computer Science, Freie Universit\"{a}t Berlin, Germany. Email: d.clemens@fu-berlin.de.
Research supported by DFG, project SZ 261/1-1.}}
\quad{Heidi Gebauer
\thanks{Institute of
Theoretical Computer Science, ETH Zurich, CH-8092 Switzerland. Email:
gebauerh@inf.ethz.ch. }
}
\quad{Anita Liebenau
\thanks{Department of Mathematics and Computer Science, Freie Universit\"{a}t Berlin, Germany. Email: liebenau@math.fu-berlin.de.
The author is supported by a scholarship from the Berlin Mathematical School
(BMS).}}}
\date{}  %So wird das Datum nicht mehr angezeigt
\newtheorem{theorem}{Theorem}[section]
\newtheorem{lemma}[theorem]{Lemma}
\newtheorem{proposition}[theorem]{Proposition}
\newtheorem{corollary}[theorem]{Corollary}
\newtheorem{problem}[theorem]{Problem}
\newtheorem{claim}[theorem]{Claim}
\def\cf{{\cal F}}
\def\cT{{\cal T}}
\def\cS{{\cal S}}
\def\cH{{\cal H}}
\newcommand{\nat}{\ensuremath{\mathbbm{N}}}
\newenvironment{proof}{\noindent{\bf Proof\,}}{\hfill$\Box$}
\definecolor{green}{RGB}{0,100,0}
\begin{document}
\maketitle

\begin{abstract}
In the {\em tournament game} two players, called Maker and Breaker, alternately take turns in claiming an unclaimed edge
of the complete graph $K_n$ and selecting one of the two possible orientations. Before the game starts, Breaker
fixes an arbitrary tournament $T_{k}$ on $k$ vertices. Maker
wins if, at the end of the game, her digraph contains a copy of $T_{k}$; otherwise Breaker wins.
In our main result, we show that Maker has a winning strategy for
$k = (2-o(1))\log_2 n$, improving the constant factor in previous results
of Beck and the second author.
This is asymptotically tight since it is known that for $k = (2-o(1))\log_2 n$
Breaker can prevent that the underlying graph of Maker's graph contains a $k$-clique.
Moreover the precise value of our lower bound differs from the upper
bound only by an {\em additive constant} of $12$.

We also discuss the question whether the random graph intuition, which suggests that the threshold for $k$ is asymptotically the same for
the game played by two ''clever'' players and the game played by two ''random''� players, is supported by the tournament game: It will turn out that, while a straightforward application of this intuition fails, a more subtle version of it is still valid.

Finally, we consider the \emph{orientation-game} version of the tournament game, where Maker wins the game if the final digraph -- containing also the edges directed by Breaker -- possesses a copy of $T_{k}$. We prove that in that game Breaker has a winning strategy for $k = (4+o(1))\log_2 n$.
\end{abstract}
\normalsize

\section{Introduction}

Let $X$ be a finite set and let $\cf \subseteq 2^X$ be a family of subsets. In the classical Maker--Breaker game $(X,\cf)$, two players, called Maker and Breaker,
alternately claim elements of $X$, with Maker going first. $X$ is usually called the
{\em board}, and $\cf$ is referred to as the {\em family of winning sets}. Maker wins the game if she claims all elements of some winning set; otherwise Breaker wins.
A well-studied class of Maker--Breaker games are \emph{graph games}, where the board is the edge set of a complete graph $K_{n}$, and Maker's goal is to create a graph which possesses some fixed (usually monotone) property $P$.
A widely investigated example of such
a game
is the \emph{$k$-clique game} (sometimes abbreviated by \emph{clique game}) where Maker wins if and only if, by the end of the game, her graph contains a clique of size at least $k$.
In \cite{es1973}, Erd\H{o}s and Selfridge considered the largest value $k_{cl}=k_{cl}(n)$ such that Maker has a winning strategy in the $k_{cl}$-clique game. By applying their well-known Erd\H{o}s-Selfridge criterion, they obtained that $k_{cl} \leq (2-o(1))\log n$ (throughout this paper all logarithms are in base 2, unless stated otherwise).
Later, Beck  \cite{BeckBook} introduced the method of \emph{self-improving potentials} and used his technique to investigate  $k_{cl}$. Finally, he determined the exact value of $k_{cl}$!
\begin{theorem}\label{BeckCliqueTheo} (\cite[Theorem 6.4]{BeckBook})
$k_{cl} = \left\lfloor 2 \log n - 2 \log \log n +2\log e -3 +o(1) \right\rfloor$
\end{theorem}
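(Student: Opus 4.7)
The claim is an exact determination of $k_{cl}$ up to $o(1)$, so the plan is to establish matching upper and lower bounds that land on the same integer after applying the floor.

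\textbf{Upper bound.} I would apply the Erd\H{o}s-Selfridge criterion (together with a sharpening thereof) to the hypergraph of $k$-cliques in $K_n$: there are $\binom{n}{k}$ winning sets, each of size $\binom{k}{2}$, so the criterion $\binom{n}{k}\cdot 2^{-\binom{k}{2}}<1/2$ already implies a Breaker win. Combined with Stirling's formula $\binom{n}{k}\sim (en/k)^k/\sqrt{2\pi k}$ and the ansatz $k = 2\log n - 2\log\log n + O(1)$, asymptotic solution yields a Breaker-winning threshold of the shape $k \geq 2\log n - 2\log\log n + 2\log e + c + o(1)$ for an explicit constant $c$ coming from a careful use of Stirling and the substitution $\log k = 1 + \log\log n + o(1)$. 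To pin down $c$ so that the floor lands precisely on the theorem's value, a second-order refinement of the potential argument is needed — for instance exploiting the two-move structure of a single round, or replacing the uniform potential with one biased toward near-completed cliques — which trims the critical sum by exactly the required amount.

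\textbf{Lower bound.} This is the substantive direction and would use Beck's \emph{self-improving potential} method. Fix $k$ at the largest integer below the claimed threshold. For an auxiliary level $\ell \leq k$, define
\[
T_\ell \;=\; \sum_{S\in\binom{V}{\ell},\; E_B(S)=\emptyset} 2^{\lambda\,|E_M(S)|},
\]
with parameters $\lambda$ and $\ell$ tuned to Maker's current progress, where $E_M(S)$ and $E_B(S)$ denote Maker's and Breaker's edges inside $S$. Maker's strategy is to claim the edge that maximally increases $T_\ell$. The essential feature is \emph{self-improvement}: after each round one re-parametrises $\lambda$ or $\ell$ so that the next guarantee is strictly sharper, producing an amortised gain per round. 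Iterating, $T_\ell$ eventually exceeds a threshold forcing some $\ell$-subset to be spanned entirely by Maker, and continuing the self-improvement from this partial clique extends it into the desired $k$-clique.

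\textbf{Main obstacle.} The hard part is calibrating the self-improving potential so that its constants match exactly those of the Erd\H{o}s-Selfridge upper bound. A naive static analysis gives only $k_{cl} \geq (1 - o(1)) \cdot 2\log n$; recovering the precise correction $-2\log\log n + 2\log e - 3$ requires a delicate balance between $\lambda$, $\ell$, and Breaker's maximum damage per move, so that the self-improvement genuinely compounds from round to round rather than being absorbed into error terms. Aligning the lower bound with the upper bound to within a single integer — so that the two floors collapse onto the same value — is exactly what makes this theorem an exact determination of $k_{cl}$ rather than a merely asymptotic one.
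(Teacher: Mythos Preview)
The paper does not prove this theorem at all: Theorem~\ref{BeckCliqueTheo} is quoted as a known result with a citation to Beck's book (Theorem~6.4 there), and the paper merely uses it as a black box to obtain the upper bound~\eqref{eq:upperboundktour} on $k_t$. So there is no ``paper's own proof'' to compare your proposal against.

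As a sketch of Beck's actual argument, your outline has the right architecture --- Erd\H{o}s--Selfridge for the upper bound, self-improving potentials for the lower bound --- but it is too vague to count as a proof. In particular, your description of the self-improving step (``re-parametrise $\lambda$ or $\ell$ after each round'') does not specify the actual recursion Beck uses, nor how the amortised gain is quantified, nor why the iteration terminates with the correct additive constant $2\log e - 3$. You also gesture at a ``second-order refinement'' needed on the upper-bound side without saying what it is. If you want to claim this as a proof rather than a heuristic, you would need to fill in those mechanics; otherwise, simply citing Beck's book as the paper does is the honest move.
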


\paragraph{Random Graph Intuition}
There is an intriguing relation between $k_{cl}$ and the corresponding extremal value $k^{\ast}_{cl}$ for a game where Maker and Breaker are replaced with ''random players'' which select their edge in each round completely at random: In this game, \emph{RandomMaker} creates a random graph $G(n,m)$ with $\lceil m = \frac{1}{2}\binom{n}{2} \rceil$ edges. It is well-known that the size of the largest clique of $G(n,\frac{1}{2}\binom{n}{2})$ is $(2 - o(1))\log n$ asymptotically almost surely (abbreviated as a.a.s. in the sequel), so the threshold where the random $k$-clique game turns from a RandomMaker's win to a RandomBreaker's win is around $(2 - o(1))\log n$; just like in the deterministic game, as shown by Theorem \ref{BeckCliqueTheo} .

For quite a few other games it has been found that the outcome of the random game is essentially the same as the outcome of the deterministic game (see, e.g., \cite{Beck94, BL01, GS09, HKSS08, ss2005}).  This phenomenon, known as the {\em random graph intuition} or the {\em Erd\H{o}s paradigm}, was first pointed out by  Chv\'{a}tal and Erd\H{o}s \cite{CE}, and later investigated further in many papers of Beck \cite{Beck85, Beck93, Beck94, Beck96} and Bednarska and \L uczak \cite{BL01}.

For a small number of games it has been established that the random graph intuition fails: An interesting example is the {\em diameter game} where the graph property $P$ Maker aims to achieve is that every pair of vertices has distance at most two (in her graph). It is known that a.a.s. $G(n, \lceil \frac{1}{2} \binom{n}{2}\rceil)$ has property $P$, hence RandomMaker wins the random game a.a.s. On the other hand, Balogh, Martin and Pluh\'{a}r \cite{BMP09} proved that actually Braker has a winning strategy in the deterministic game, which yields that the random graph intuition fails in this case. It is an interesting open problem to determine suitable criteria which guarantee for a given game that the random graph intuition (or some weaker version of it) holds.

\paragraph{Tournament Game}
In this paper we study a variant of the $k$-clique game and investigate whether it supports the random graph intuition.
A \emph{tournament} is a directed graph where every pair of vertices is connected by a single directed edge. The \emph{$k$-tournament game} $\cT(k,n)$ is played on $K_{n}$. At the beginning of the game Breaker fixes an arbitrary tournament $T_{k}$ on $k$ vertices. In each round, Maker and Breaker then alternately claim one unclaimed edge (as in classical graph games), and -- additionally -- select one of the two possible orientations for their chosen edge. If, at the end of the game, Maker's graph contains a copy of the goal tournament $T_k$, she wins; otherwise, Breaker is the winner. Note that for the outcome of this particular game, the orientations of Breaker's edges are irrelevant.
In the light of general {\em orientation games}, which we shall introduce shortly, they become meaningful though.

Let $k_{t} = k_{t} (n)$ denote the largest $k$ such that Maker has a winning strategy in the game $\cT(k,n)$. To get an indication for the value of $k_{t}$, Beck analyzed the {\em random tournament game} in which RandomMaker and RandomBreaker each choose their edge and the corresponding orientation uniformly at random. He found that the threshold where the random game turns from a RandomMaker's win to a RandomBreaker's win is around $(1 - o(1)) \log n$. Motivated by the question whether the tournament game supports the random graph intuition, Beck \cite{BeckBook} asked to determine $k_{t}$.
Since a winning strategy for Breaker in the $k$-clique game allows him to prevent Maker from achieving any tournament on $k$ vertices, we have
\begin{equation} \label{eq:upperboundktour}
k_{t} \leq k_{cl} = \left(2-o(1)\right)\log n.
\end{equation}
The second equation follows from Theorem \ref{BeckCliqueTheo}. From the other side, Beck \cite[p. 457]{BeckBook} derived that \[k_{t} \geq \left(\frac{1}{2}-o(1)\right)\log n.\]  In fact, he proved the stronger statement that for $k = (\frac{1}{2} - o(1))\log n$, Maker has a strategy to occupy a graph containing a copy of \emph{every} tournament on $k$ vertices. The lower bound on $k_{t}$ was improved by the second author in \cite{g2012} to $k_{t} \geq (1 - o(1)) \log n$.
In our main result we show that the upper bound is tight. This means that $k_{t}$ is twice as large as the random graph intuition suggests.
\begin{theorem} \label{MainTournament}
$k_{t} \geq 2\, \log n - 2\, \log \log n - 12 = (2 - o(1))\log n$.
\end{theorem}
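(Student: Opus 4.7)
The plan is to adapt Beck's self-improving potential strategy behind Theorem~\ref{BeckCliqueTheo} to the tournament setting. The key observation is that Maker herself chooses the orientation of each edge she claims, so the orientation constraint is not an intrinsic obstruction: if she commits her embedding choices in a disciplined, vertex-by-vertex manner, she can steer her emerging clique towards any prescribed tournament structure.

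Concretely, Maker will build a nested sequence of Maker-cliques $C_1\subset C_2\subset\ldots\subset C_k$ together with an injection $\rho:C_k\to V(T_k)$ decided piecewise. To extend $C_i$ to $C_{i+1}$ she chooses a candidate vertex $u$ from the set $W_i$ of vertices $w\in V(K_n)\setminus C_i$ such that no edge $wc$ with $c\in C_i$ has been claimed by Breaker; she commits a still-unused role $\rho(u)\in V(T_k)\setminus\rho(C_i)$ to $u$, and then over her next $i$ turns claims the $i$ edges $uc$ with the orientations prescribed by $T_k$ under $\rho$. Since a role is fixed only at the moment a vertex is about to join $C$, no previously-claimed Maker edge can ever conflict with a later role assignment, and orientations never need to be revised.

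The core step is a potential argument showing that $W_i$ remains non-empty throughout. Following Beck, I would attach to each $w\in V(K_n)\setminus C_i$ a weight decaying geometrically in the number of edges from $w$ to $C_i$ that Breaker has already claimed, and set $\Phi_i=\sum_w\omega(w)$. Maker plays the edge that maximises her gain in $\Phi$; a standard Erd\H{o}s--Selfridge style bookkeeping shows that Breaker's best response can only shrink $\Phi$ by a bounded multiplicative factor per round. Tracking $\Phi$ through the $\binom{k}{2}$ rounds needed to complete $C_k$ would then yield $\Phi_k\geq 1$, and hence $W_k\neq\emptyset$, as long as $k\leq 2\log n-2\log\log n-12$.

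The main obstacle is the delicate accounting across the $i$ rounds of each extension stage: during those rounds Breaker has $i$ moves, which he may split between attacking the current candidate $u$ (forcing Maker to abandon $u$ in favour of some $u'\in W_i$) and pre-attacking future candidates in $W_i$. One has to interleave the analysis of these two threats tightly enough to squeeze out the $-2\log\log n$ lower-order term matching the tight clique-game bound; by contrast, the orientation commitment itself costs only an additive $O(1)$, accounting for the explicit $-12$ in the stated inequality.
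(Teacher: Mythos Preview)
Your approach has a real gap, and it diverges sharply from the paper's.

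The strategy you outline---commit to a single growing clique $C_1\subset C_2\subset\cdots$ and track a potential $\Phi_i=\sum_w\omega(w)$ summed over \emph{vertices}---cannot reach $k=(2-o(1))\log n$. The factor $2$ in Beck's clique bound comes precisely from a potential ranging over all $\binom{n}{k}\approx n^{k}/k!$ candidate $k$-cliques at once; Maker never commits to any particular one until the game is over. A potential over the at most $n$ one-vertex extensions of an already-fixed $C_i$ is the type of argument that delivers the easy $(1-o(1))\log n$ lower bound, not the tight one: once $C_i$ is public, Breaker can concentrate his edges on the star around $C_i$, and no Erd\H{o}s--Selfridge bookkeeping on single vertices recovers the missing factor. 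There is also an internal inconsistency in the sketch: the moves you prescribe for Maker---the edges $uc$ from the current candidate $u$ into $C_i$---do not change your $\Phi$ at all, so ``playing the edge that maximises her gain in $\Phi$'' has no content as written. The ``delicate accounting'' you flag as the main obstacle is not a technicality to be tightened; it is where the argument actually breaks.

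The paper handles both the orientation issue and the commitment issue with one clean device. Before any edge is played, Maker partitions $V(K_n)$ into $k$ equal classes $V_1,\dots,V_k$ and identifies $V_i$ with vertex $u_i$ of $T_k$; every cross-edge she ever claims is then oriented according to $T_k$ automatically. This reduces her task to the purely undirected Maker--Breaker game of building a transversal $K_k$ in the balanced complete $k$-partite graph---no dynamic role assignment, no risk of orientation conflicts. She wins that game by verifying Beck's Advanced Weak Win Criterion with $p=4$, i.e.\ a global potential over all $(n/k)^k$ transversal cliques, controlled via a sunflower/$4$-cluster calculation; that is exactly the level of machinery needed to reach $2\log n-2\log\log n-O(1)$.
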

As a direct consequence of \eqref{eq:upperboundktour}, Theorem \ref{BeckCliqueTheo} and Theorem \ref{MainTournament}, the asymptotics of $k_{t}$ are determined:
\[k_{t} = 2 \log n - 2 \log \log n + \Theta(1) = (2 - o(1)) \log n. \]
Remarkably, the upper bound in the clique-game and the lower bound in the tournament game differ only by an additive constant of 12.
Thus, the additional constraint that the edges have to be oriented in a particular way
makes it only a little harder for Maker.

Our result seemingly refutes the random graph intuition described above.
However, as a first step towards the proof of Theorem \ref{MainTournament} we will define a suitable, {\em classical} graph game ${\cal G}$ (with no edge-orientations involved), which has the property that every winning strategy for Maker on ${\cal G}$ directly gives her a winning strategy for the tournament game.
It turns out that in the random analogue of ${\cal G}$, the threshold where the game turns from a RandomMaker's win to RandomBreaker's win is around $(2-o(1))\log n$. Thus, from a more subtle point of view, the random graph intuition can be considered valid.

\paragraph{Orientation Games}
We study a variant of the tournament game, which belongs to the class of {\em orientation games}.
Following the notation in \cite{bks2012}, an orientation game is played on the edge set of $K_{n}$ by two players,
called OMaker and OBreaker, which alternately orient (or direct) a previously undirected edge.
At the end of the game, OMaker wins if and only if the final digraph consisting of both OMaker's
and OBreaker's edges satisfies a given property $P$.
Various orientation games have been studied in the literature (see, e.g., \cite{bks2012}, \cite{bs1998} and \cite{chsv1995}).

In this paper we study an orientation-version of the tournament game.
Let $T_k$ be a given tournament on $k$ vertices.
By $Or(T_k)=Or(T_k,n)$ we denote the orientation game in which
OMaker aims to achieve that the final digraph contains a copy of $T_k$.
In the spirit of the $k$-clique game and the $k$-tournament game,
it is quite natural to ask for the largest integer $k_o=k_o(n)$
such that OMaker has a winning strategy for the game $Or(T_k)$ for every tournament $T_k$ on $k_o$ vertices.

Trivially, $k_o$ is at least as large as the corresponding extremal number $k_{t}$ for the ordinary tournament game. We will show that, asymptotically, $k_o$ is at most twice as large as $k_t$.
\begin{theorem}\label{OTournament}
Let $n$ be large enough, let $k\geq 4\log n +2$ be an integer and let $T_k$ be a tournament on $k$ vertices.
Then OBreaker has a strategy to win the game $Or(T_k,n)$.
\end{theorem}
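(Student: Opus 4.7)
The plan is to exhibit an explicit winning strategy for OBreaker via an Erd\H{o}s--Selfridge style potential function, adjusted so that OBreaker's own (possibly accidentally helpful) orientations are correctly accounted for. For each injection $\pi \colon V(T_k) \to V(K_n)$, call $\pi$ \emph{alive} at a given point of the game if every edge of $K_n$ between two images of $\pi$ that has already been oriented is oriented in agreement with the corresponding edge of $T_k$. Denote by $u(\pi)$ the number of as-yet unoriented edges between images of $\pi$, and set
\[
w(\pi) \;=\; \begin{cases} 2^{-u(\pi)/2}, & \pi \text{ alive,} \\ 0, & \text{otherwise,}\end{cases} \qquad W \;=\; \sum_\pi w(\pi).
\]
Two features are worth emphasising. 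First, initially $W_0 = (n)_k \cdot 2^{-\binom{k}{2}/2}$; for $k \ge 4\log n + 2$ the exponent $k\log n - \binom{k}{2}/2$ is at most $-\log n - \tfrac12$, so $W_0 < (\sqrt 2\, n)^{-1}$. Second, at the end of the game every edge is oriented, so $W_{\mathrm{end}}$ simply counts the copies of $T_k$ in the final tournament; it therefore suffices to show that OBreaker has a strategy ensuring $W_{\mathrm{end}} < 1$.

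OBreaker's strategy will be the greedy one: on each of her turns, pick the unoriented edge $e$ and orientation $d$ that minimise $W$ after the move. To analyse it, for each unoriented edge $e$ write $S_+(e), S_-(e)$ for the sum of $w(\pi)$ over alive $\pi$ passing through $e$ with prescribed orientations $+, -$ respectively, and set $a(e) = \max(S_+(e), S_-(e))$, $b(e) = \min(S_+(e), S_-(e))$. Orienting $e$ as $d$ changes $W$ by $(\sqrt 2 - 1)S_d(e) - S_{\neg d}(e)$: surviving copies that wanted direction $d$ have their weights multiplied by $\sqrt 2$, while those wanting $\neg d$ die. In particular, on a fixed edge OMaker's best increase is $(\sqrt 2 - 1) a(e) - b(e)$ and OBreaker's best decrease is $a(e) - (\sqrt 2 - 1) b(e)$; on the same edge these differ by the favourable quantity $(2-\sqrt 2)(a(e)+b(e))$. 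The factor $\sqrt 2$ (instead of the usual $2$ of Erd\H{o}s--Selfridge) captures that OBreaker is only half as deadly as a Maker--Breaker Breaker, since her moves can accidentally help OMaker; this is precisely what inflates the bound from $(2-o(1))\log n$ to $(4+o(1))\log n$.

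The main step is to show that each round (OMaker's move followed by OBreaker's response) cannot increase $W$. Since the two players act on \emph{different} edges $e_M$ and $e_B$, one must ensure that OBreaker has an edge $e_B \neq e_M$ for which the post-OMaker quantity $a'(e_B) - (\sqrt 2 - 1) b'(e_B)$ is at least OMaker's gain $(\sqrt 2 - 1) a(e_M) - b(e_M)$. I expect this to be the main obstacle, since the two players optimise different functions over possibly different sets of edges. The natural route is an averaging argument: the sum $\sum_{e \text{ unoriented}} \bigl(a(e) - (\sqrt 2 - 1) b(e)\bigr)$ is comparable to $\sum_\pi w(\pi)\,u(\pi)$, i.e.\ to $W$ times the typical number of unoriented edges in an alive $\pi$, so its maximum over $e$ is large enough that even after excluding $e_M$ some remaining edge covers OMaker's gain. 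Combining the per-round non-increase with $W_0 < 1$ then gives $W_{\mathrm{end}} \le W_0 < 1$, forcing no copy of $T_k$ in the final tournament and proving the theorem.
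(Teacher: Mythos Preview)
Your potential $W=\sum_\pi 2^{-u(\pi)/2}$ and the initial bound $W_0<1$ are both correct, and indeed this is exactly the quantity that governs the problem. But the argument is not complete: the ``main obstacle'' you flag is a genuine gap, not a routine step, and the averaging sketch you offer does not close it. Knowing that $\sum_e\bigl(a(e)-(\sqrt 2-1)b(e)\bigr)$ is of order $\sum_\pi w(\pi)u(\pi)$ only tells you that the \emph{average} edge offers OBreaker a decrease of that order divided by the number of unoriented edges; it says nothing about comparing $\max_{e\neq e_M}\bigl(a'(e)-(\sqrt 2-1)b'(e)\bigr)$ with OMaker's gain $(\sqrt 2-1)a(e_M)-b(e_M)$, which can be highly concentrated on a single edge. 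Moreover, OMaker's move perturbs the $a',b'$ values on every edge sharing a live $\pi$ with $e_M$, so even the edge that was best for OBreaker before the round need not remain good enough after it. In short, you have set up the right potential but have not supplied the monotonicity lemma that makes it work.

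The paper avoids this difficulty entirely by a reduction rather than a direct potential analysis. It introduces the $(2:1)$ Maker--Breaker game on the board of \emph{ordered} pairs $(u,v)$, with winning sets the arc sets of copies of $T_k$, and observes that Beck's biased Erd\H os--Selfridge criterion (with $a=2$, $b=1$) gives Breaker a win there once $\sum_F 2^{-|F|/2}<1/2$, i.e.\ once $n^k 2^{-\binom{k}{2}/2}<1/2$; this is exactly your condition. The substantive step is then a short simulation lemma: if OMaker had a winning strategy in $Or(T_k)$, Maker could win the $(2:1)$ game by, whenever Breaker takes an arc $(u,v)$, first claiming the reverse arc $(v,u)$ and treating it as OBreaker's orientation, and then claiming the arc prescribed by OMaker's strategy. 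This packages your ``accidental help'' phenomenon as one of Maker's two moves per round and hands the monotonicity question to the already-proved biased criterion. If you want to salvage the direct approach, that simulation is precisely the missing ingredient: it converts the orientation game into a setting where Breaker's move is purely destructive, which is what your per-round inequality really needs.
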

Together with Theorem \ref{MainTournament}, since $k_t\leq k_o$ as mentioned, we therefore get
$$2\log n(1-o(1)) \leq k_o \leq 4\log n (1+o(1)).$$
The rest of the paper is organized as follows. In Section \ref{proofMainTournament}
we prove Theorem \ref{MainTournament}.
In Section \ref{sec:OTournament}
we prove Theorem \ref{OTournament}.
Finally, in Section \ref{remarksAndOpenProblems},
we discuss open problems related to the games we study.

\section{Proof of Theorem \ref{MainTournament}}\label{proofMainTournament}

Let $n \in \nat$ be large enough, and let $k$ be the largest integer such that
$n \geq k 2^{(k+9)/2}$.
Note that by definition, $n < (k+1) 2^{(k+10)/2}$,
so $k \geq 2\, \log n - 2\, \log \log n -12$.
For clarity of presentation, we assume from now on that $n = k 2^{(k+9)/2}$.

Let $T_k$ be the tournament on $k$ vertices that Breaker chooses at the beginning,
with $V(T_k) = \{ u_1,\ldots,u_k\}$.
First, Maker partitions the vertex set into $k$ equally sized parts:
$V(K_n) = V_1 \dot\cup \ldots\dot \cup V_k$. Then she identifies
the class $V_i$ with the vertex $u_i$:
Whenever Maker claims an edge between $V_i$ and $V_j$,
she chooses the direction according to the direction of $u_iu_j$ in $T_k$.
Therefore, her goal reduces to gaining a copy of a clique $K_k$, containing one
vertex from each class $V_i$.
Hence, she plays on the reduced board
\[ X := \Big\{ \{v_i,v_j\} \, : \, v_i \in V_i, v_j \in V_j, i \neq j \Big\}. \]
Our goal is to prove that she wins the classical Maker--Breaker game $(X,\cf)$ where
$\cf$ consists of all edge sets of $k$-cliques in the reduced $k$-partite graph:
\[\cf :=\Bigg\{ \binom{S}{2} : S \subseteq V_1 \,\dot\cup \ldots \dot{\cup}\, V_k \text{ such that  $|S\cap V_i|=1$, for every $1 \leq i \leq k$ }\Bigg\}. \]
To this end, we will use a general criterion for Maker's win from \cite{BeckBook}.
Let us introduce the necessary notation first.
For $p \in \nat$, we define the set of {\em p-clusters} of $\cf$ as
$$\cf_2^p := \left\{ \bigcup_{1\leq i \leq p} E_i \, :
	\{E_1,\ldots,E_p\} \in \binom{\cf}{p}, \, \Big| \bigcap_{1\leq i \leq p} E_i \Big| \geq 2 \right\}.$$

That is, $\cf_2^p$ is the family consisting of all those subsets of $X$ which can be represented
as the union of $p$ distinct winning sets sharing at least two elements of $X$.
Furthermore,
for any family ${\fam=2 H}$ of finite sets,
we consider the well-known potential function used in the Erd\H{o}s-Selfridge criterion
$$T({\fam=2 H}):=\sum_{H\in {\fam=2 H}}2^{-|H|}.$$
According to Beck \cite{BeckBook}, we have the following sufficient condition for Maker's win.

\begin{theorem}[Advanced Weak Win Criterion, \cite{BeckBook}]
\label{awwc}
Maker has a winning strategy for the Maker--Breaker game $(X,\cf)$,
if there exists an integer $p\geq 2$ such that
\begin{equation}\label{AWWC}
\frac{T(\cf)}{|X|} > p + 4p \Big(T(\cf_2^p)\Big)^{1/p}.
\end{equation}
\end{theorem}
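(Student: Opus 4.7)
The plan is to prove the criterion via the standard self\hyp{}improving potential method, adapted to capture the higher\hyp{}order overlap corrections encoded by $\cf_2^p$. The heuristic is that the Erd\H{o}s--Selfridge potential $T(\cf)$ controls Breaker's ``per move'' damage to first order, but Breaker can be more efficient when his moves lie in several unfinished winning sets simultaneously; the $p$-cluster term $T(\cf_2^p)^{1/p}$ measures exactly this efficiency, so once we subtract it with a large enough multiplier we obtain a quantity that Maker can drive to infinity.

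I would proceed by contradiction: assume Breaker has a strategy such that Maker never occupies a winning set. Let $M_i$ and $B_i$ be the players' edge sets after round $i$, and introduce the usual danger potential
$$T_i(\cf)\;=\;\sum_{A\in\cf,\, A\cap B_i=\emptyset} 2^{-|A\setminus M_i|},$$
and the analogous $T_i(\cf_2^p)$ on $p$-clusters. \textbf{Step 1 (Maker's strategy).} Maker plays greedily, in each round picking the element $x\in X\setminus(M_i\cup B_i)$ that maximises the local danger
$$d_i(x)\;=\;\sum_{A\in\cf,\, x\in A,\, A\cap B_i=\emptyset} 2^{-|A\setminus M_i|}.$$
\textbf{Step 2 (First\hyp{}order accounting).} A one\hyp{}step computation, identical to the Erd\H{o}s--Selfridge book\hyp{}keeping, shows that Maker's move doubles the weights of all sets through $x$ while Breaker's move zeroes those through $y$, so the net change in $T_i(\cf)$ is $d_i(x)-d_i(y)$, which is nonnegative by greediness, plus a second\hyp{}order term coming from pairs of sets both containing $x$. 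The greedy Maker therefore keeps the first\hyp{}order part of the potential monotone; by the assumption that Breaker survives, $T_i(\cf)$ must drop to $0$ only through the absence of ``live'' sets, not through a large aggregate decrease.

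\textbf{Step 3 (Cluster correction via a power--mean bound).} The obstacle is to control the accumulated second\hyp{}order loss, which is a sum over ordered tuples of sets sharing at least two elements. This is where $p$-clusters enter: grouping such tuples by size and applying a H\"older/Jensen inequality (the ``$p$-th power trick'' that gives the method its name \emph{self\hyp{}improving potentials}) bounds the aggregated loss by $c\cdot p\cdot T(\cf_2^p)^{1/p}$ for an absolute constant $c\le 4$. Essentially, one estimates $\sum_A 2^{-|A\setminus M_i|}\cdot(\text{number of partners of }A)^{p-1}$ by the $(p-1)$\hyp{}st power of an average and invokes $T(\cf_2^p)$ on the result.

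\textbf{Step 4 (Finishing).} Combining Steps 2 and 3 over the entire game of at most $|X|$ rounds and dividing by $|X|$, one obtains that the average gain per Maker move is at least
$$\frac{T(\cf)}{|X|}\;-\;p\;-\;4p\,T(\cf_2^p)^{1/p},$$
which is strictly positive by hypothesis. Hence some winning set must be completely claimed by Maker, contradicting the assumed Breaker strategy.

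The main obstacle is \textbf{Step 3}: turning the qualitative statement ``higher\hyp{}order overlaps are absorbed by $p$-clusters'' into a quantitative inequality with the sharp constants $p$ and $4p$ appearing in \eqref{AWWC}. This is the heart of Beck's argument and the reason one optimises over the integer parameter $p\ge 2$; the remaining steps are a routine, albeit careful, ES\hyp{}style potential computation.
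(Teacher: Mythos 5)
The paper does not prove this theorem at all; it is imported verbatim from Beck's book (\cite{BeckBook}) and used as a black box. So there is no in-paper proof to compare against, and what you have produced is a reconstruction attempt.

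Your sketch captures the broad shape of Beck's self-improving-potential argument (a greedy Maker guided by an Erd\H{o}s--Selfridge-type danger function, with a correction term accounting for overlap among winning sets), but it is not a proof. The decisive step, your Step~3, is stated only as a heuristic: ``grouping such tuples by size and applying a H\"older/Jensen inequality\ldots bounds the aggregated loss by $c\cdot p\cdot T(\cf_2^p)^{1/p}$ for an absolute constant $c\le 4$.'' No such inequality is established, and you yourself flag this as ``the main obstacle'' and ``the heart of Beck's argument.'' Since the entire content of the theorem is exactly the quantitative form of that bound --- with the specific constants $p$ and $4p$ and the $p$-th root of the cluster potential --- leaving it unproven means the theorem is not proven. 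Step~2 is also not quite right as stated: the one-round change in $T_i(\cf)$ is not simply $d_i(x)-d_i(y)$, because the order of moves matters and sets containing both $x$ and $y$ are first doubled and then killed (or vice versa); it is precisely this interaction that seeds the cluster correction, and glossing over it hides where $\cf_2^p$ actually enters the accounting. In Beck's actual argument the bookkeeping is substantially more intricate (it tracks an auxiliary potential over many rounds and uses the $p$-cluster family to amortize the accumulated second-order loss), and none of that machinery appears here in a form that could be checked. In short: plausible outline, but the quantitative core that yields inequality~\eqref{AWWC} is missing.
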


In the remainder of the proof we will show that our choice of $(X,\cf)$
satisfies \eqref{AWWC} for $p=4$ and $n$ large enough.
First, we note that
\begin{align}
T(\cf) & = \sum_{F \in \cf} 2^{-|F|} = \left(\frac{n}{k} \right)^k\cdot 2^{-\binom{k}{2}}=2^{5k}\nonumber\\
\text{and} \quad \frac{|X|}{T(\cf)} & = \frac{\binom{k}{2} \left(\frac{n}{k}\right)^2}{2^{5k}}
	\leq \frac{k^2\,  2^{k+9} }{2^{5k}} = o(1). \label{firstPartAwwc}
\end{align}

As a first step towards the application of the Advanced Weak Win Criterion, we give an estimate on $T(\cf_2^4).$
By definition,
\[ \cf_2^4 = \left\{ \bigcup_{1\leq i \leq 4} E_i \, :
	\{E_1,\ldots,E_4\} \in \binom{\cf}{4}, \, \Big| \bigcap_{1\leq i \leq 4} E_i \Big| \geq 2 \right\}. \]
Note that any collection of cliques meets in two edges if and only if it meets in a triangle. Recall that the elements of $\cf_2^4$ are referred to as \emph{clusters}.
Following the standard notation, we call a cluster a {\em sunflower} if there is a triangle such
that any two of the four cliques meet in exactly this triangle. Figure \ref{fig:Sunlfower} shows an illustration.
We denote the subset of sunflowers of
$\cf_2^4$ by $\cS_2^4$.
By definition, a sunflower $F \in \cS_2^4$ has exactly $4\binom{k}{2} - 9$ edges.
In $\cf_2^4$, there are at most $\binom{k}{3}\left(\frac{n}{k}\right)^3\cdot\left(\frac{n}{k}\right)^{4(k-3)}$
sunflowers.
Therefore,
$$T(\cS_2^4) \leq \binom{k}{3}\left(\frac{n}{k}\right)^{4k-9}2^{-4\binom{k}{2}+9} =: f(n,k).$$
It will turn out that $f(n,k)$ dominates the sum $T(\cf_2^4)$.

\begin{figure} [tbp]
 \centering
\phantom{asdfasdfasdfaasdfasfasdfad}
 \includegraphics[width=0.36\textwidth]{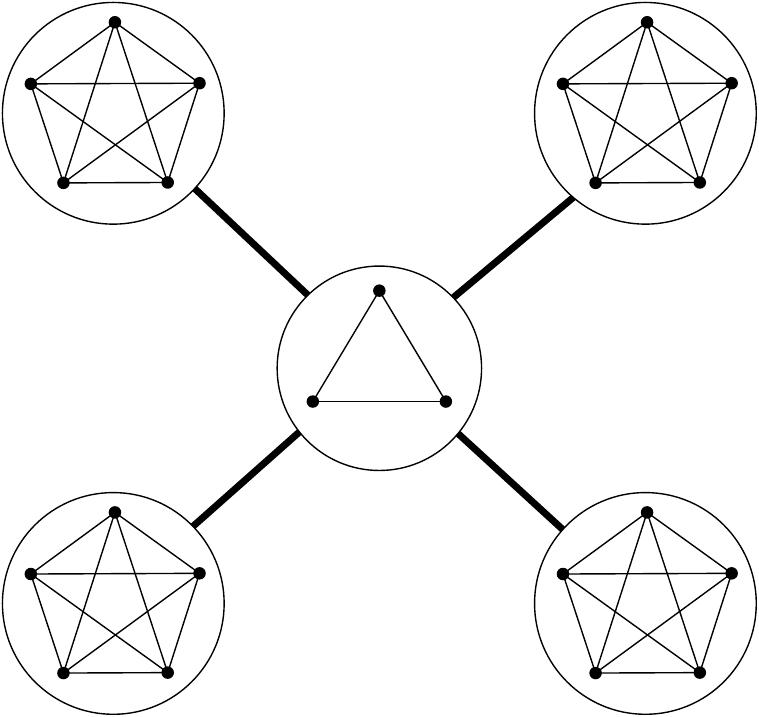}
 \caption{An example of a sunflower for $k = 8$. A thick line indicates that the vertices of the corresponding sets are pairwise connected.}
 \label{fig:Sunlfower}
 \end{figure}

For every $E \in \cf$, we let $V(E)$ denote the set of vertices corresponding to $E$. Note that $|V(E)| = k$ for every $E \in \cf$.
As a first step of our analysis we use the technique of Beck~\cite{BeckBook} to assign to each cluster $F = \bigcup_{1\leq i \leq 4} E_i$ \emph{some} sequence $S(F) := (m_1,m_2,m_3)$ such that
\begin{eqnarray*}
m_1 & = & \left|V(E_1) \cap V(E_2)\right|, \\
	m_2 & = & \left|\left(V(E_1) \cup V(E_2) \right) \cap V(E_3)\right|, \\
m_3 & = & \left|\left(V(E_1) \cup V(E_2) \cup V(E_3)\right) \cap V(E_4)\right|.
\end{eqnarray*}

Note that for a given cluster $F$ we may have several choices to select $S(F)$ (depending on the considered order of the $E_{i}$).
Furthermore, we let $\cf _2^4(m_1,m_2,m_3)$  denote the subset of clusters of $\cf_2^4$
to which we assigned the sequence $(m_1,m_2,m_3)$.
Then obviously,
\begin{equation} \label{eq:clustersum}
T(\cf_2^4)\leq \sum_{m_1=3}^{k}\sum_{m_2=3}^k\sum_{m_3=3}^k T(\cf_2^4(m_1,m_2,m_3)).
\end{equation}

We now bound the cardinality of $\cf _2^4(m_1,m_2,m_3)$.
\begin{proposition} \label{prop:clustercardinality}
For fixed $3 \leq m_1,m_2,m_3 \leq k$,
we have that
\begin{align*}
\left| \cf _2^4(m_1,m_2,m_3)\right| &\leq
	 \binom{k}{3}\, \left(\frac{n}{k}\right)^{4k}\cdot \prod_{j=1}^{3}\binom{jk}{m_j-3}\left(\frac{k}{n}\right)^{m_j}.
\end{align*}
Furthermore, for any cluster $F\in \cf _2^4(m_1,m_2,m_3)$ we have
$
|F| \geq 4\binom{k}{2}-\binom{m_1}{2}-\binom{m_2}{2}-\binom{m_3}{2}.
$
\end{proposition}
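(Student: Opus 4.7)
The plan is to prove both claims by constructing each cluster step by step, based on the observation (stated just before the proposition) that any four $k$-cliques of $\cf$ whose common edge-intersection has size at least two must in fact share a common triangle.

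For the cardinality bound I would count ordered $4$-tuples $(E_1,E_2,E_3,E_4)$ of winning sets whose cumulative vertex intersections have the sizes prescribed by $(m_1,m_2,m_3)$; since every cluster $F\in\cf_2^4(m_1,m_2,m_3)$ admits at least one such ordering (namely the one that determined its assigned sequence $S(F)$), this gives an upper bound on $|\cf_2^4(m_1,m_2,m_3)|$. First, pick the three parts hosting the common triangle in $\binom{k}{3}$ ways and a vertex of the triangle in each such part in $(n/k)^3$ ways. Complete $E_1$ by choosing a vertex in each of the remaining $k-3$ parts, contributing $(n/k)^{k-3}$. Then, for $j=1,2,3$ in turn, the clique $E_{j+1}$ must contain the triangle and meet $V(E_1)\cup\cdots\cup V(E_j)$ in exactly $m_j$ vertices: I pick the $m_j-3$ additional shared vertices among the at most $jk-3$ non-triangle vertices of that union in at most $\binom{jk}{m_j-3}$ ways, and then pick a vertex of $E_{j+1}$ in each of the remaining $k-m_j$ parts in at most $(n/k)^{k-m_j}$ ways. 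Multiplying all factors and collecting the powers of $n/k$, whose exponents sum to $4k-m_1-m_2-m_3$, produces precisely the claimed expression $\binom{k}{3}(n/k)^{4k}\prod_{j=1}^3\binom{jk}{m_j-3}(k/n)^{m_j}$.

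For the edge-count lower bound I would apply the standard peel-off inequality
\[
|F|\;=\;\Big|\bigcup_{i=1}^4 E_i\Big|\;\geq\;|E_1|+\sum_{j=2}^{4}\Big(|E_j|-\big|E_j\cap(E_1\cup\cdots\cup E_{j-1})\big|\Big),
\]
where $|E_j|=\binom{k}{2}$ for every $j$. The key point is that an edge of $E_j\cap(E_1\cup\cdots\cup E_{j-1})$ must have both endpoints lying in $V(E_j)\cap(V(E_1)\cup\cdots\cup V(E_{j-1}))$, a set of cardinality exactly $m_{j-1}$ by the definition of $S(F)$; consequently that intersection contains at most $\binom{m_{j-1}}{2}$ edges. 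Substituting yields $|F|\geq 4\binom{k}{2}-\binom{m_1}{2}-\binom{m_2}{2}-\binom{m_3}{2}$.

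The only mild subtlety I anticipate is in the counting step, where the sharper bounds on the number of shared-vertex choices are in fact $\binom{k-3}{m_1-3}$, $\binom{2k-m_1-3}{m_2-3}$ and $\binom{3k-m_1-m_2-3}{m_3-3}$; replacing these by $\binom{jk}{m_j-3}$ is a routine binomial-monotonicity step that remains valid even when the sharper top index is exceeded (the true binomial then being $0$). Apart from that, both parts of the proposition reduce to careful bookkeeping of how vertex intersections dictate edge intersections.
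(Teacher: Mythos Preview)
Your proposal is correct and follows essentially the same argument as the paper: fix a common triangle, count the ways to build $E_1,\dots,E_4$ around it with the prescribed cumulative intersections, and for the edge bound observe that $E_{j+1}$ adds at least $\binom{k}{2}-\binom{m_j}{2}$ new edges. Your write-up is in fact slightly more explicit than the paper's on the edge-count step (spelling out why a repeated edge must lie inside the $m_{j-1}$-set), but the underlying reasoning is identical.
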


\begin{proof} We fix any $m_{1}, m_{2}, m_{3}$ with $3 \leq m_1,m_2,m_3 \leq k$, and we also fix any triple $v_{1}, v_{2}, v_{3}$ of vertices from distinct classes. We now derive an upper bound on the number of those clusters in $\cf _2^4(m_1,m_2,m_3)$ where all four cliques contain $v_{1}$, $v_{2}$, and $v_{3}$. To this end we consider the number of possibilities to select $V(E_{1})\backslash \{v_{1}, v_{2}, v_{3}\}$, $V(E_{2})\backslash \{v_{1}, v_{2}, v_{3}\}$, $V(E_{3})\backslash \{v_{1}, v_{2}, v_{3}\}$, $V(E_{4})\backslash \{v_{1}, v_{2}, v_{3}\}$.
Note that we have $\left(\frac{n}{k} \right)^{k-3}$ possibilities to choose the $k-3$ vertices of $V(E_{1}) \backslash \{v_{1}, v_{2}, v_{3}\}$.

Suppose that for some $1 \leq i \leq 3$ we have already determined the sets $V(E_{1})\backslash \{v_{1}, v_{2}, v_{3}\}$, $\ldots$ ,$V(E_{i})\backslash \{v_{1}, v_{2}, v_{3}\}$. Then $V(E_{1}), \ldots, V(E_{i})$ cover at most $3 + i(k-3)$ vertices.  Therefore, we have at most $\binom{3 + i(k - 3)}{m_{i}-3} \leq \binom{ik}{m_{i}-3}$ choices for those vertices of $\big(V(E_1) \cup \ldots \cup V(E_i)\big) \cap V(E_{i+1})$ which are different from $v_{1}, v_{2}, v_{3}$. Finally, there are at most $\left(\frac{n}{k}\right)^{k-m_{i}}$ possibilities to select $V(E_{i + 1}) \setminus \big(V(E_1) \cup \ldots \cup V(E_i)\big)$.

Therefore, for any given $m_{1}, m_{2}, m_{3}$, every triple  $v_{1}, v_{2}, v_{3}$ of vertices contributes at most
\[ \left(\frac{n}{k}\right)^{k-3}\cdot \prod_{i=1}^{3}\binom{ik}{m_i-3}\left(\frac{n}{k}\right)^{k-m_i} \]
to the number of clusters in $\cf _2^4(m_1,m_2,m_3)$.
Hence,
\begin{align*}
\left| \cf _2^4(m_1,m_2,m_3)\right| &\leq  \binom{k}{3} \left(\frac{n}{k} \right)^{3} \left(\frac{n}{k}\right)^{k-3}\cdot \prod_{i=1}^{3}\binom{ik}{m_i-3}\left(\frac{n}{k}\right)^{k-m_i}   \\
&= \binom{k}{3}\, \left(\frac{n}{k}\right)^{4k}\cdot \prod_{i=1}^{3}\binom{ik}{m_i-3}\left(\frac{k}{n}\right)^{m_i},
\end{align*}
as claimed.
For the second part of the proposition, note that $|E_{1}| = \binom{k}{2}$ and that  every $E_{i+1}$ contributes at least $\binom{k}{2} - \binom{m_{i}}{2}$ new edges to the cluster.
\end{proof}

We now show that $f(n,k)$ dominates the sum $T(\cf_2^4)$.
\begin{lemma}
\label{firstPart}
$T(\cf _2^4)< k^3 \, f(n,k)$, provided $k$ is large enough.
\end{lemma}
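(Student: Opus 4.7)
The plan is to combine equation~\eqref{eq:clustersum} with both parts of Proposition~\ref{prop:clustercardinality}, carry out the algebraic cancellation against $f(n,k)$, and exploit the resulting product structure. For each triple $(m_1,m_2,m_3)$ with $3\leq m_j\leq k$, Proposition~\ref{prop:clustercardinality} together with the definition of $f(n,k)$ yields after cancellation
\[
\frac{T(\cf_2^4(m_1,m_2,m_3))}{f(n,k)} \;\leq\; \prod_{j=1}^{3} h_j(m_j), \qquad h_j(m):=\binom{jk}{m-3}\Bigl(\frac{k}{n}\Bigr)^{m-3} 2^{\binom{m}{2}-3}.
\]
Because this upper bound factorises, summing over $(m_1,m_2,m_3)$ via~\eqref{eq:clustersum} gives $T(\cf_2^4)\leq f(n,k)\prod_{j=1}^{3}S_j$ with $S_j:=\sum_{m=3}^{k}h_j(m)$, so it suffices to show $S_j<k$ for each $j\in\{1,2,3\}$.

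I would establish the latter by proving the stronger pointwise bound $h_j(m)\leq 1$ for every $3\leq m\leq k$: since the sum has $k-2$ terms, this already gives $S_j\leq k-2<k$. Observe that $h_j(3)=1$, so there is nothing to show at $m=3$. For $m\geq 4$ I would substitute the concrete choice $n=k\cdot 2^{(k+9)/2}$, so that $k/n=2^{-(k+9)/2}$. Writing $a=m-3\geq 1$ and using $\binom{jk}{a}\leq\binom{3k}{a}$ for $j\leq 3$, a short computation collapses the exponent of $2$ and reduces $h_j(m)\leq 1$ to the single arithmetic inequality
\[
\log_2\binom{3k}{a} \;\leq\; \tfrac{1}{2}\,a(k+4-a) \qquad \text{for every } 1\leq a\leq k-3.
\]

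The main obstacle is verifying this inequality uniformly in $a$. For small $a$ the crude estimate $\log_2\binom{3k}{a}\leq a\log_2(3ek/a)=O(a\log k)$ is dominated by the linear-in-$k$ right-hand side $a(k+4-a)/2\geq ak/3$. For intermediate $a$ the quadratic right-hand side beats the entropy bound $\log_2\binom{3k}{a}\leq 3k\,H(a/(3k))\leq 3k\,H(1/3)<2.76\,k$ with room to spare. The delicate case is $a$ close to $k-3$, where $\log_2\binom{3k}{a}$ is still of order $2.76\,k$ while the right-hand side has shrunk to the linear value $7(k-3)/2=3.5\,k-10.5$; the linear slack of roughly $0.74\,k$ is exactly what the specific scaling $n=k\cdot 2^{(k+9)/2}$ is designed to produce, and it comfortably absorbs the lower-order $O(\log k)$ corrections in the entropy bound for $k$ large enough. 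Once $h_j(m)\leq 1$ is established throughout $[1,k-3]$, multiplying the three factors yields $T(\cf_2^4)<k^{3}\,f(n,k)$, as claimed.
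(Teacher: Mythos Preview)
Your argument is correct and follows the paper's proof almost verbatim: the paper defines the same factors $g_j(m)=\binom{jk}{m-3}(k/n)^{m-3}2^{\binom{m}{2}-3}$, shows $T(\cf_2^4(m_1,m_2,m_3))\leq f(n,k)\prod_j g_j(m_j)$, and then proves $g_j(m)\leq 1$ pointwise, concluding via \eqref{eq:clustersum}. The only difference is in the verification of the pointwise bound: the paper uses a clean two-range split at $m=15k/16$, estimating $\binom{jk}{m-3}\leq (jk)^{m-3}$ in the low range and $\binom{jk}{m-3}\leq 2^{jk}\leq 2^{3k}$ in the high range, whereas you use entropy-type estimates with a three-range heuristic. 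Both routes work; the paper's split is more explicit and avoids having to make your ``small / intermediate / close to $k-3$'' regions precise.
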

\begin{proof}
By definition of $T(\cdot)$ and Proposition \ref{prop:clustercardinality} we have that
\begin{align}\label{TOfSubcluster}
T\big(\cf_2^4(m_1,m_2,m_3)\big)
	&\leq   \binom{k}{3}\, \left(\frac{n}{k}\right)^{4k}
	\cdot \prod_{j=1}^{3}\left(\binom{jk}{m_j-3}\left(\frac{k}{n}\right)^{m_j}\right)
			\cdot 2^{-4\binom{k}{2}+\binom{m_1}{2}+\binom{m_2}{2}+\binom{m_3}{2}}\nonumber\\
	&=  \binom{k}{3} \left(\frac{n}{k}\right)^{4k} 2^{-4\binom{k}{2}}
			\cdot \prod_{j=1}^{3} \binom{jk}{m_j-3}\left(\frac{k}{n}\right)^{m_j}2^{\binom{m_j}{2}}.\nonumber\\	
	&=  f(n,k)\cdot \prod_{j=1}^{3} \binom{jk}{m_j-3}\left(\frac{k}{n}\right)^{m_j-3}2^{\binom{m_j}{2}-3}.
\end{align}
We set $g_j(m) := \binom{jk}{m-3}\left(\frac{k}{n}\right)^{m-3}2^{\binom{m}{2}-3}$.
We will show that
$g_j(m)\leq 1$ for all $j\in\{1,2,3\}$ and $3\leq m\leq k$,
provided $k$ is large enough.
Indeed, for $3 \leq m \leq \frac{15k}{16}$ and $k$ large enough, we have
\begin{align} \label{eq:first}
g_j(m)  & \leq  \left( jk  \right)^{m-3} \cdot 2^{-\frac{k+9}{2}(m-3)} \cdot 2^{\frac{(m+2)(m-3)}{2}} \nonumber \\
	& =   \left(jk \cdot 2^{-\frac{k+9}{2}+\frac{m+2}{2}}\right)^{m-3} 	
	\leq \left(jk \cdot 2^{-\frac{k}{32}-\frac{7}{2}}\right)^{m-3} \leq 1.
\end{align}
For $\frac{15k}{16}\leq m \leq k$ and $k$ large enough, we obtain
\begin{equation} \label{eq:second}
g_j(m) \leq    2^{jk} \left(2^{-\frac{k+9}{2}+\frac{m+2}{2}}\right)^{m-3}
	% \leq 2^{3k} \left(2^{\frac{-k +m - 7}{2}}\right)^{(m-3)}
   	\leq 2^{3k} \left(2^{-\frac{7}{2} }\right)^{m-3}
  	\leq 2^{3k} \left(2^{-\frac{7}{2} }\right)^{\frac{15k}{16}-3}  \leq 1.
\end{equation}

Now, \eqref{TOfSubcluster}, \eqref{eq:first} and \eqref{eq:second} imply that $T(\cf_2^4(m_1,m_2,m_3))\leq f(n,k)$
for any sequence $(m_1,m_2,m_3)$, provided $k$ is large enough. Due to \eqref{eq:clustersum}, we conclude that
$T(\cf_2^4)\leq k^3 f(n,k).$
\end{proof}

Finally, we show that the Advanced Weak Win Criterion (Theorem \ref{awwc}) applies with $p=4$.
\begin{corollary}
For $n$ large enough, $T(\cf)>16|X|\Big(\left(T(\cf _2^4)\right)^{1/4}+\frac{1}{4}\Big)$.
\end{corollary}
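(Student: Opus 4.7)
The inequality to prove is precisely the Advanced Weak Win Criterion \eqref{AWWC} with $p=4$, multiplied through by $|X|$; since Lemma \ref{firstPart} has already done the hard combinatorial bound, what remains is essentially bookkeeping. My plan is to split the right-hand side as $16|X|(T(\cf_2^4))^{1/4} + 4|X|$ and to show separately that each summand is strictly less than $T(\cf)/2$ for $n$ (equivalently, $k$) sufficiently large.

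The additive part is immediate from equation \eqref{firstPartAwwc}, which already records $|X|/T(\cf) = o(1)$; hence $4|X| < T(\cf)/2$ for all sufficiently large $n$. For the main term, raising to the fourth power reduces the task to verifying
\[ T(\cf_2^4) \ <\ \left(\frac{T(\cf)}{32|X|}\right)^4. \]
Using $T(\cf)=2^{5k}$ and $|X|=\binom{k}{2}\cdot 2^{k+9}$ from \eqref{firstPartAwwc} together with $n/k=2^{(k+9)/2}$, the right-hand side equals $2^{16k-56}/\binom{k}{2}^4$. On the left, Lemma \ref{firstPart} gives $T(\cf_2^4)\leq k^3 f(n,k)$; substituting the definition of $f(n,k)$ and collecting the exponents of $2$ produces a closed form whose exponent is linear in $k$ with a half-integer coefficient of the form $(31k-63)/2$. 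After cancellation the whole inequality collapses to one of the shape ``polynomial in $k$ of fixed degree $<$ $2^{\alpha k+\beta}$'' with $\alpha>0$, which clearly holds for $k$ large.

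No real obstacle arises at this stage: all the delicate counting is hidden in Proposition \ref{prop:clustercardinality} and Lemma \ref{firstPart}, so the remaining work amounts to tracking exponents and confirming that the leading gap outgrows the polynomial factors. The only mild source of care will be the half-integer exponents introduced by $n/k=2^{(k+9)/2}$, which must be handled cleanly to confirm that the exponential slope on the right-hand side is strictly positive after all cancellations; this is routine arithmetic rather than a substantive difficulty.
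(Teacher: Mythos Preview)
Your proposal is correct and follows essentially the same route as the paper: both use \eqref{firstPartAwwc} for the additive term $4|X|$ and then combine Lemma~\ref{firstPart} with the explicit values $T(\cf)=2^{5k}$, $|X|=\binom{k}{2}\,2^{k+9}$, $n/k=2^{(k+9)/2}$ to reduce the main term to an inequality of the form ``fixed-degree polynomial in $k$ $<$ $2^{\alpha k}$'' with $\alpha>0$. The only cosmetic difference is that the paper bounds the ratio $16|X|(T(\cf_2^4))^{1/4}/T(\cf)$ directly, whereas you first raise to the fourth power; the resulting arithmetic (your exponent $(31k-63)/2$ versus the paper's $2^{-k/8+O(\log k)}$ after taking the fourth root) is equivalent.
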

\begin{proof}
By Lemma \ref{firstPart}, the definition of $f(n,k)$ and the fact that $|X| \leq n^{2}$ we get that
\begin{align*}
\frac{16\, |X|\, (T(\cf_2^4))^{1/4}}{T(\cf)}
	 & \leq \frac{16\, |X|\, (k^3 \, f(n,k))^{1/4}}{T(\cf)}
      \leq \frac{16\, n^{2}\, k^{\frac{3}{4}}\, \left(k^3 \,  \left(\frac{n}{k}\right)^{4k-9}\, 2^{-4 \binom{k}{2} + 9} \right)^{\frac{1}{4}}}{T(\cf)} \\
    &  \leq \frac{16 \cdot 2^{\frac{9}{4}}\, n^{2}\, k^{\frac{6}{4}}\,  \left(\frac{n}{k}\right)^{k-\frac{9}{4}}\, 2^{- \binom{k}{2}}}{\left(\frac{n}{k} \right)^{k}\cdot 2^{-\binom{k}{2}}}
    \leq \frac{ 100 n^2\, k^{\frac{6}{4}} }{\left(\frac{n}{k} \right)^{\frac{9}{4}}}  \\
	& = \frac{100 k^{\frac{15}{4}}}{n^{\frac{1}{4}}} < 100\cdot 2^{-\frac{k}{8}+\frac{14}{4}\log(k)}= o(1).
\end{align*}
By \eqref{firstPartAwwc},
\[ \frac{\frac{1}{4} \cdot 16 |X|}{T(\cf)} = o(1), \]
which concludes the proof.
\end{proof}

\section{Proof of Theorem \ref{OTournament}}\label{sec:OTournament}
We first need some notation.
In an $(a:b)$ Maker--Breaker game $(X,\cf)$
Maker claims $a$ elements and Breaker claims $b$ elements in each round. A game is called \emph{biased} if $a \neq 1$ or $b \neq 1$.

In order to provide OBreaker
with a winning strategy for the game $Or(T_k)$
we associate with $Or(T_k)$ an auxiliary \emph{biased} Maker--Breaker game.
In the first step of the proof we show that Breaker has a strategy to win the auxiliary game, and in the second step we prove that this strategy directly gives him a winning strategy for $Or(T_k)$.

We will make use of the generalized Erd\H{o}s-Selfridge-Criterion proven by Beck \cite{BeckBook}.

\begin{theorem}[Generalized Erd\H{o}s-Selfridge-Criterion]\label{ESC}
Let $X$ be a finite set and let ${\mathcal F} \subseteq 2^X$. If
$$\sum_{F \in {\mathcal F}}(1+b)^{-|F| / a} < \frac{1}{1+b},$$ then
Breaker has a winning strategy in the $(a : b)$ Maker--Breaker game $(X, {\mathcal F})$.
\end {theorem}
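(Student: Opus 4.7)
The plan is to generalize the classical $(1:1)$ Erd\H{o}s--Selfridge potential argument to the biased setting. Assign to each winning set $F$ the weight
\[ w(F) \;:=\; (1+b)^{(|F \cap M| - |F|)/a} \]
if $F \cap B = \emptyset$ (we call $F$ \emph{alive}), and $w(F) := 0$ otherwise, where $M$ and $B$ are the current sets of Maker's and Breaker's elements. Put $\Phi := \sum_{F \in {\mathcal F}} w(F)$, so that initially $\Phi_0 < 1/(1+b)$ by hypothesis. If at any point Maker holds a complete winning set $F^*$, then $w(F^*) = 1$ and hence $\Phi \geq 1$. The goal is therefore to describe a Breaker strategy that keeps $\Phi < 1$ throughout the game.

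Breaker's strategy is greedy: after Maker's $a$ picks in each round, he claims $b$ elements one after another, each time choosing the unclaimed element of maximum current \emph{danger} $d(y) := \sum_{F \ni y,\, \text{alive}} w(F)$. The analysis of a single round splits into two estimates. First, Maker's move increases $\Phi$ in a controlled fashion: if Maker claims $x_1, \ldots, x_a$, then for each alive $F$ its weight is multiplied by $(1+b)^{k_F/a}$, where $k_F = |F \cap \{x_1, \ldots, x_a\}|$. Convexity of $t \mapsto (1+b)^t$ yields $(1+b)^{k_F/a} \leq 1 + bk_F/a$ for $k_F \in \{0,1,\ldots,a\}$, whence the total increase satisfies
\[ \Delta_M \;\leq\; \frac{b}{a} \sum_{i=1}^{a} d_{\mathrm{pre}}(x_i), \]
where $d_{\mathrm{pre}}$ uses the weights from before Maker's move.

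Second, one must show Breaker's decrease dominates Maker's increase, i.e.\ $\Delta_B \geq \Delta_M$. Since Maker's move only raises weights of alive sets, post-Maker dangers dominate pre-Maker dangers on every unclaimed element; combined with the greedy rule that picks the top residual danger at each substep, this reduces the estimate to a combinatorial inequality comparing the sum of the $b$ largest dangers Breaker can still access with the sum of the $a$ dangers Maker captured. Once this is verified, $\Phi$ is non-increasing between consecutive starts of rounds, so $\Phi \leq \Phi_0 < 1/(1+b)$ at the start of every round, and $\Phi < (1+b)\cdot 1/(1+b) = 1$ even at the worst intermediate moment during a round. In particular, no winning set is ever fully claimed by Maker, so Breaker wins.

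The main obstacle is the key inequality $\Delta_B \geq \Delta_M$, which lies at the heart of any biased Erd\H{o}s--Selfridge argument: the $b$ elements Breaker picks \emph{after} Maker must absorb the damage done by Maker's $a$ elements of potentially higher pre-move danger, and handling this requires carefully exploiting that greedy choice maximizes at each substep together with the monotonicity of the dangers under Maker's moves. All other ingredients --- the definition of the weight, the convexity bound, and the translation between ``potential below $1$'' and ``no complete winning set'' --- are routine.
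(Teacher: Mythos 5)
The paper does not prove this statement; it cites it from Beck's book (Theorem \ref{ESC}), so there is no in-paper proof to compare against. Reviewing your proposal on its own merits, the potential function, the convexity bound $(1+b)^{k_F/a}\le 1+bk_F/a$, and the translation ``$\Phi<1$ throughout $\Rightarrow$ Breaker wins'' are all sound; the gap is exactly where you flagged it, and it is not fixable in the form you propose.

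The inequality $\Delta_B\ge\Delta_M$, with the round parsed as (Maker's $a$ picks, then Breaker's $b$ picks), is \emph{false} in general, so the invariant ``$\Phi$ is non-increasing between consecutive starts of rounds'' cannot be established. Concretely, take $a=b=1$ and three winning sets $F_1,F_2,F_3$ of size $4$ pairwise intersecting only in a common element $x$; then $\Phi_0=3\cdot 2^{-4}=3/16<1/2$, so the hypothesis holds. If Maker opens with $x$, then $\Delta_M=3/16$, but afterwards every unclaimed element lies in a single alive set of residual size $3$, so every available danger equals $2^{-3}=1/8$ and $\Delta_B\le 1/8<\Delta_M$. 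The potential rises from $3/16$ to $1/4$ across the round. The difficulty is structural: Maker's picks remove precisely the elements that would have carried the largest danger, so ``the $b$ largest dangers Breaker can still access'' need not dominate ``the $a$ dangers Maker captured,'' and the monotonicity of dangers under Maker's moves does not help because it only applies to elements Breaker can still take.

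The standard (and, I believe, Beck's) remedy is to re-parse the move sequence: handle Maker's very first $a$-element move separately --- this is exactly what the factor $(1+b)$ in the hypothesis absorbs, giving $\Phi<1$ after that opening block via $\Delta_M\le b\Phi_0$ --- and then analyze the rest of the game in blocks of the form \emph{(Breaker's $b$ picks, then Maker's $a$ picks)}. In such a $(B,M)$ block, the greedy rule immediately gives the needed domination: if $d^{(j)}$ denotes the danger after Breaker has claimed $y_1,\dots,y_{j-1}$ and $d''$ the danger after all of Breaker's $b$ picks, then each $x_i$ is unclaimed throughout Breaker's sub-moves, dangers at unclaimed elements only decrease as Breaker claims, and greediness yields $d^{(j)}(y_j)\ge d^{(j)}(x_i)\ge d''(x_i)$ for every $i$. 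Hence $d^{(j)}(y_j)\ge\max_i d''(x_i)\ge\frac{1}{a}\sum_i d''(x_i)$, so $\Delta_B=\sum_j d^{(j)}(y_j)\ge\frac{b}{a}\sum_i d''(x_i)\ge\Delta_M$, and $\Phi$ is non-increasing across $(B,M)$ blocks. This closes the argument, whereas the $(M,B)$ pairing you chose cannot.
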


Let $T_k$ be some tournament on $k$ vertices.
Consider the $(2:1)$ Maker--Breaker game $\cH(T_k)= {\fam=2 H}(T_k,n) = (X,\cf (T_k))$ where
$$X:=\left\{(u,v):\, u,v\in\, V(K_n),\, u\neq v \right\}$$
is the board of the game consisting of $|X|=n(n-1)$ elements, and
$${\mathcal F (T_k)}:=\left\{ S\subseteq X:\, S \text{ is a copy of } T_k \right\}$$
is the family of winning sets.

\begin{claim}\label{BreakerWin}
For large enough $n$ and $k\geq 4\log n + 2$,
Breaker has a winning strategy in
${\fam=2 H}(T_k)$, for any tournament $T_k$ on $k$ vertices.
\end{claim}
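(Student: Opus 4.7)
The plan is to apply the Generalized Erd\H{o}s-Selfridge Criterion (Theorem \ref{ESC}) directly to the auxiliary game $\cH(T_k)=(X,\cf(T_k))$ with $a=2$ and $b=1$. Under this choice of parameters, the criterion reduces to the single numerical inequality
\[
\sum_{F\in\cf(T_k)} 2^{-|F|/2} \;<\; \tfrac{1}{2}.
\]

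First I would observe that each $F\in\cf(T_k)$ is the set of directed edges of an oriented copy of $T_k$, so $|F|=\binom{k}{2}$ is the same for every winning set. Next, I would bound $|\cf(T_k)|$: any copy of $T_k$ inside the complete directed graph on $V(K_n)$ is determined by an injection $V(T_k)\hookrightarrow V(K_n)$, which gives
\[
|\cf(T_k)| \;\leq\; n(n-1)\cdots(n-k+1) \;\leq\; n^k.
\]
(Quotienting by $|\mathrm{Aut}(T_k)|$ would sharpen this, but is not needed.)

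It then remains to verify that $n^k\cdot 2^{-\binom{k}{2}/2}<\tfrac{1}{2}$. Taking $\log_2$, this amounts to showing $k\log n - k(k-1)/4 < -1$. Using the hypothesis $k\geq 4\log n+2$, I would rewrite $k-1\geq 4\log n+1$ to obtain
\[
\tfrac{k(k-1)}{4} \;\geq\; k\log n + \tfrac{k}{4},
\]
so the exponent is bounded above by $-k/4$, which is well below $-1$ for $n$ (and hence $k$) large enough.

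I do not expect a genuine obstacle here: the threshold $k\geq 4\log n+2$ is engineered precisely so that the Generalized Erd\H{o}s-Selfridge Criterion fires for $a=2$, $b=1$ and winning sets of size $\binom{k}{2}$. The real work for Theorem \ref{OTournament} lies in the step \emph{after} Claim \ref{BreakerWin}, where one must convert a winning strategy for Breaker in the biased $(2:1)$ game $\cH(T_k)$ into a legitimate winning strategy for OBreaker in the orientation game $Or(T_k)$ -- matching each of OMaker's single directed edges against two ``virtual'' claims by Maker, and interpreting OBreaker's orientations so that any copy of $T_k$ in the final digraph yields a Maker-win in the auxiliary game, contradicting the Erd\H{o}s-Selfridge bound.
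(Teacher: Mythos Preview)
Your proposal is correct and follows exactly the paper's own proof: apply the Generalized Erd\H{o}s-Selfridge Criterion with $a=2$, $b=1$, use the crude bound $|\cf(T_k)|\leq n^k$ together with $|F|=\binom{k}{2}$, and check that $n^k\cdot 2^{-k(k-1)/4}\leq \tfrac{1}{2}$ when $k\geq 4\log n+2$. Your additional arithmetic detail and your forward-looking remark about Lemma~\ref{hypergraph-game} are accurate but not needed for the claim itself.
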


\begin{proof}
We check that Theorem \ref{ESC} applies.
By definition,
$|{\fam=2 F}(T_k)|
%\leq \binom{n}{k}\cdot 2^k
<n^k$,
and $|F|=\binom{k}{2}$ for every $F\in{\fam=2 F} (T_k)$. Therefore,
and since $k \geq 4 \log n + 2$,
$$
\sum_{F \in {\mathcal F}}2^{-|F| / 2}
< n^k\cdot 2^{-k(k-1)/4} \leq \frac{1}{2}.
$$
\end{proof}

We conclude the proof of Theorem \ref{OTournament} with the following lemma.
\begin{lemma}\label{hypergraph-game}
Let $T_{k}$ be a tournament on $k$ vertices.
Suppose that Breaker has a strategy to win the game ${\fam=2 H}(T_k)$.
Then there is also a winning strategy for OBreaker in the game  $Or(T_k)$.
\end{lemma}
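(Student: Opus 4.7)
My plan is to have OBreaker play $Or(T_k)$ while maintaining in his head a simulated run of the auxiliary $(2{:}1)$ Maker--Breaker game $\mathcal{H}(T_k)$, following the winning strategy $\sigma$ for Breaker that is guaranteed by hypothesis. The correspondence I will use is the natural one: each time an edge $\{u,v\}$ gets physically oriented as $(u,v)$ in $Or(T_k)$ (by either player), I record $(u,v)$ as a claim of virtual-Maker and the now-unreachable reverse $(v,u)$ as a claim of virtual-Breaker. Under this convention the final digraph in $Or(T_k)$ coincides exactly with virtual-Maker's claim set, so any copy of $T_k$ in that digraph would be a winning set of $\mathcal{F}(T_k)$ captured by virtual-Maker.

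Given this bookkeeping, OBreaker's own moves will be dictated by $\sigma$. Whenever it is OBreaker's turn, he consults $\sigma$ for Breaker's next move in the current virtual state, obtains some element $(a,b)\in X$, and physically orients $\{a,b\}$ as $(b,a)$. This single physical move simultaneously realises $\sigma$'s instruction (it produces the virtual-Breaker claim $(a,b)$) and also records a virtual-Maker claim $(b,a)$. I need to verify that such a move is always legal, i.e.\ that $\{a,b\}$ has not yet been oriented. Since $\sigma$ only returns elements currently unclaimed in $\mathcal{H}(T_k)$, and any prior orientation of $\{a,b\}$ would have placed one of $(a,b),(b,a)$ into the virtual-Maker set together with the other into the virtual-Breaker set, an unclaimed $(a,b)$ forces $\{a,b\}$ to be still undirected, so OBreaker can indeed orient it.

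It remains to argue that $\sigma$ keeps its winning promise in the simulation. Over one round of $Or(T_k)$ two physical orientations occur, each contributing one virtual-Maker and one virtual-Breaker claim. Strictly speaking the simulated run is therefore a $(2{:}2)$ game rather than a $(2{:}1)$ one, but extra claims never hurt Breaker in a Maker--Breaker game, so the additional virtual-Breaker claim per round (the reverse of OMaker's orientation) can be treated as a free move layered on top of $\sigma$; $\sigma$ therefore remains a winning strategy in the enriched simulation. Consequently virtual-Maker never claims a complete set of $\mathcal{F}(T_k)$, and, by the correspondence, the final digraph of $Or(T_k)$ cannot contain a copy of $T_k$, so OBreaker wins.

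The main obstacle I anticipate is the legality check in the second paragraph: a priori $\sigma$ could try to instruct OBreaker to orient an already-oriented edge, and the paired Maker/Breaker bookkeeping is tailored precisely to rule this out. A secondary, minor concern is that $\sigma$ was formally defined for the strict $(2{:}1)$ play order while the simulation interleaves the virtual claims differently within a round; this is harmless because Breaker's strategies of this type are naturally defined as functions of the current game state rather than of the full play history, and so $\sigma$ applies verbatim to the states produced by the simulation.
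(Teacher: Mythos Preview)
Your argument is correct, but it takes the contrapositive route compared with the paper.

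The paper assumes, for contradiction, that OMaker has a winning strategy $\mathcal S$ in $Or(T_k)$ and uses it to build a \emph{Maker} strategy in the $(2:1)$ game $\mathcal H(T_k)$: whenever Breaker claims $(u,v)$ in $\mathcal H(T_k)$, Maker first claims the reverse $(v,u)$, interprets this pair as an OBreaker move $(v,u)$ in a simulated $Or(T_k)$, then consults $\mathcal S$ for OMaker's reply $(x,y)$ and claims $(x,y)$ as her second element. Here the $(2:1)$ bias is used exactly: Maker's two elements per round are ``reverse of Breaker's move'' plus ``OMaker's move,'' so the simulated run of $\mathcal H(T_k)$ is a bona fide $(2:1)$ play, with no appeal to extra moves or to invoking a strategy on states outside its native game tree.

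You instead construct OBreaker's strategy directly from a Breaker strategy $\sigma$ for $\mathcal H(T_k)$, orienting the reverse of whatever $\sigma$ outputs. This is natural and yields an explicit OBreaker strategy, but the simulated game is effectively $(2:2)$: each physical orientation contributes a virtual-Maker claim and a virtual-Breaker claim, so the states on which you call $\sigma$ never arise in the strict $(2:1)$ game. Your legality argument for OBreaker's move relies on $\sigma$ never returning an element already in the \emph{enlarged} virtual-Breaker set, which is exactly what the na\"ive $\sigma$ need not guarantee. You patch this with the folklore ``extra Breaker moves never hurt'' principle; this is correct, but note that the standard proof of that principle produces a modified strategy (which, when $\sigma$ asks for a freebie Breaker already owns, plays arbitrarily and keeps a separate core simulation), rather than licensing a verbatim call to $\sigma$ on the enlarged state. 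So your final sentence is slightly too breezy: $\sigma$ does not literally apply to those states, but a routine wrapper around $\sigma$ does, and the wrapped output is always unclaimed in the full virtual state, which is what your legality check needs. With that caveat, the proof stands.
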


\begin{proof}
We first need some notation. By \emph{directing an edge $(u,v)$} we mean that we direct the edge spanned by $u$ and $v$ from $u$ to $v$. We note that in each round of ${\fam=2 H}(T_k)$, Maker is allowed to choose either two, one, or zero elements. (Otherwise she can just claim additional, arbitrary elements, and then follow her strategy. If this strategy calls for something she occupied before, she takes an arbitrary element; no extra element is disadvantageous for her.)

Suppose, for a contradiction, that OMaker has a winning strategy $\cS$ for $Or(T_k)$.
We now describe a strategy $\cS'$ for Maker in ${\fam=2 H}(T_k)$.
During the play, Maker simulates (in parallel) a play of the game $Or(T_k)$, and maintains the invariant that after each of her moves in ${\fam=2 H}(T_k)$, every pair $u,v \in V$ has the property that
\begin{itemize}
\item[$(i)$] in $\cH(T_k)$, if Breaker owns the element $(u,v)$ then Maker owns $(v,u)$, and
\item[$(ii)$] in $Or(T_k)$, there is a directed edge from $u$ to $v$ if and only if Maker has claimed the element $(u,v)$ in $\cH(T_k)$.
\end{itemize}

Let $(a,b)$ be the edge $\cS$ tells OMaker to direct in her first move.
Then Maker claims the element $(a,b)$ in the actual game $\cH(T_k)$
(at this point she does not make use of the possibility to occupy two elements),
and directs $(a,b)$ in the parallel game $Or(T_k)$ as OMaker.

Suppose that $i$ rounds have been played, and let $(u,v) \in X$ denote the element Breaker chose in his $i$th move.
If Maker has already claimed $(v,u)$ in a previous round then she does not claim a single element.
Otherwise, as her $(i+1)$st move, she first occupies the element $(v,u)$. In the parallel game $Or(T_k)$,
she directs as OBreaker the edge $(v,u)$. Then she identifies the edge $(x,y)$ the strategy $\cS$ tells OMaker to direct.
Finally, she directs $(x,y)$ as OMaker in $Or(T_k)$, and claims the element $(x,y)$ in $\cH(T_k)$.

We note that the invariants (i) and (ii) remain satisfied after Maker's $(i+1)$st move. So, by following $\cS'$, Maker can guarantee that at the end of the game, these invariants still hold. Since by assumption $\cS$ is a winning strategy, the final digraph in $Or(T_k)$ contains a copy of $T_{k}$. Together with invariant (ii) this yields that Maker possesses all elements of some winning set in ${\fam=2 H}(T_k)$.
This contradicts the assumption that Breaker has a winning strategy for ${\fam=2 H}(T_k)$.
\end{proof}

\section{Concluding remarks and open problems}\label{remarksAndOpenProblems}

\paragraph{Random Graph Intuition.}
As noted in the introduction, our lower bound on $k_t$ seemingly refutes the random graph intuition.
However, to prove Theorem \ref{MainTournament},
we reduced the tournament game to a classical Maker--Breaker game on the board of the complete $k$-partite
graph with vertex classes $V_1,\ldots,V_k$, where each $V_i$ has size roughly $n/k$.
Let us consider the corresponding random game on this reduced board, where in every round, each player claims a random unclaimed edge.
We shortly sketch why the threshold where this game turns from a RandomMaker's win to a RandomBreaker's win is around $(2 - o(1)) \log n$:
By standard techniques it can be shown that the expected number of $k$-cliques in RandomMaker's graph is
\[(1+o(1))\left( \frac{n}{k}\right)^k 2^{-\binom{k}{2}},\] which jumps from
below one to above one at $k = 2\log n - 2\log \log n - 1 +o(1)$.
Analogously to the proof of the concentration result for the largest clique size
in the random graph $G(n,m)$ \cite{BollobasBook}, it can be shown that
if
$\left\{\begin{array}{l}
\text{$k\leq (2-o(1))\log n$ } \\
\text{$k\geq (2-o(1))\log n$ }
\end{array}\right\}$
then RandomMaker's graph
$\left\{\begin{array}{l}
\text{contains } \\
\text{does not contain}
\end{array}\right\}$
a $k$-clique a.a.s.
\vspace*{0.2cm} Thus, from a more subtle point of view, the random graph intuition can be considered valid.

\paragraph{Tournament game.}
The strategy for Maker in Theorem \ref{MainTournament} is
independent of the actual tournament $T_k$ Breaker chooses.
On the other hand, the upper bound in \eqref{eq:upperboundktour}
is the upper bound from the $k$-clique game. That means
that for $k> 2\log n - 2\log \log n +o(1)$, Breaker has a strategy to
prevent Maker from building {\em any} given tournament on $k$
vertices. So both, the lower and the upper bound, are universal for
all tournaments on $k$ vertices.
On the other hand, there is a tournament on
$k_{cl}$ vertices which Maker can build:
Consider the transitive tournament on the vertex set $\{u_1,\ldots, u_k\}$,
where, say, for all indices $1 \leq i < j \leq k$ the edge between $u_{i}$
and $u_{j}$ is directed from $u_{i}$ to $u_{j}$.
Now, let $\{v_1,\ldots,v_n\}$ be an arbitrary enumeration of the vertices in $K_n$.
Then Maker can just follow the strategy provided by the $k$-clique game.
Whenever this strategy tells her
to claim the edge $\{v_i,v_j\}$ for $i<j$, she chooses the direction $(v_i,v_j)$.
%However, it is reasonable to suspect
%that symmetric tournaments are {\em easier} to build than asymmetric
%ones.
It would be interesting to determine whether all tournaments are ``equally hard''
for Maker.
Therefore, we pose the following question.
\begin{problem}
Does there exist a tournament $T_{k}$
on $k\leq k_{cl}$ vertices such that in the $T_k$-building game,
Breaker has a strategy to prevent Maker from building $T_k$?
\end{problem}

Note that a negative answer to this question, together with Theorem \ref{BeckCliqueTheo},
would give us the exact value of $k_t$. But even if there exists a tournament
on at most $k_{cl}$ vertices which Breaker can prevent,
it is of particular interest to get rid of the gap in the constant term.

\begin{problem}
Determine the exact value of $k_{t}$.
\end{problem}

%One might wonder whether a more technical approach, applying the Advanced Weak Win
%Criterion with some $p$ tending to infinity (as done in the proof of Theorem \ref{BeckCliqueTheo}), 
%would help to tackle this problem.
%Unfortunately, we cannot even confirm the correctness of the original argument
%(given in Section 25 of \cite{BeckBook}).
%When bounding the sizes of clusters (and showing that they are dominated by the sunflowers),
%Beck divides the collection of all $p$-clusters into three classes. However, the case distinction he
%applies does not seem to cover all possible clusters, and the set of uncovered clusters seems to be
%rather large. We were not able to fix this problem, but of course it is quite possible that we
%overlook something.

\paragraph{Orientation tournament game.}
A similar discussion arises for the orientation version $Or(T_k)$.
Since we use Breaker's strategy in the game $\cH (T_k)$ as a black box,
by Claim \ref{BreakerWin}, OBreaker wins $Or(T_k)$ for $k\geq 4\log n +2$, for
{\em any} tournament $T_k$ on $k$ vertices.
\begin{problem}
For $k_1 \in \nat$, do there exist two (non-isomorphic) tournaments $T$ and $T'$
on $k_1$ vertices such that OMaker has a winning strategy in the orientation game $Or(T)$,
but OBreaker has a strategy in the game $Or(T')$?
\end{problem}
Furthermore, the lower and the upper bound
for $k_o(n)$ are a factor of two apart.
The probabilistic analysis would suggest the breakpoint to be around $2 \log n$.
\begin{problem}
Determine the constant $2\leq c \leq 4$ such that $k_o(n) = (c+o(1)) \log n$.
\end{problem}

{\bf Universal tournament game.}
As noted in the introduction, Maker has a strategy to occupy a copy
of every tournament on $k$ vertices for $k\leq (1/2 -o(1)) \log n$.
Beck conjectured that this result is not best possible and posed the following problem.
\begin{problem}[\cite{BeckBook}, p. 457]
Determine the largest $k_u$ such that Maker has a strategy such that
at the end of the game, her digraph contains {\em every} tournament on $k_u$
vertices.
\end{problem}
For the upper bound, we can show that
$k_u \leq (1+o(1))\log n$:
We note that the number of unlabelled $k$-tournaments is at least $c(k) := 2^{\binom{k}{2}}/k! > 2^{\binom{k}{2}-k\log k}$.
By assumption, Maker has a strategy to occupy a copy of every tournament on $k_{u}$ vertices.
Hence, at the end of the game, the underlying graph of Maker's graph contains $c(k_{u})$ (not necessarily edge-disjoint) distinct cliques.
However, a result of Bednarska and \L uczak (see Lemma 5 in \cite{bl00}) asserts that there is some $k = (1 + o(1))\log n$ such that in the ordinary graph game (where no edge-orientations are involved), Breaker has a strategy to prevent Maker from claiming more than $c(k)$ distinct $k$-cliques. Thus, $k_u \leq (1+o(1))\log n$. To the best of our knowledge, nothing better is known for the universal
tournament game.

\bibliographystyle{abbrv}
\bibliography{AnitaReferences,TournamentRefs}

\end{document}